\numberwithin{equation}{section} \DeclareMathSizes{2}{10}{12}{13}
\newtheorem{theorem}{Theorem}[section]
\newtheorem{lemma}[theorem]{Lemma}
\newtheorem{prop}[theorem]{Proposition}
\newtheorem{definition}[theorem]{Definition}
\numberwithin{equation}{section}
\numberwithin{equation}{section}
\newcommand{\Cn}{C^{n}(A, B, \varepsilon)}
\newcommand{\Cm}{C^{m}(A, B, \varepsilon)}
\begin{document}

\title{BV-operators and the secondary Hochschild complex}
\author{Mamta Balodi\footnote{Department of Mathematics, Indian Institute of Science, Bangalore. Email : mamta.balodi@gmail.com} $\qquad$ Abhishek Banerjee\footnote{Department of Mathematics, Indian Institute of Science, Bangalore. Email : abhishekbanerjee1313@gmail.com} \footnote{AB was partially supported by SERB Matrics fellowship MTR/2017/000112} $\qquad$  Anita Naolekar\footnote{Stat-Math Unit, Indian Statistical Institute, Bangalore. Email:
anita@isibang.ac.in} }
\date{}
\maketitle

% ----------------------------------------------------------------
\begin{abstract}

We introduce the notion of a BV-operator $\Delta=\{\Delta^n:V^n\longrightarrow V^{n-1}\}_{n\geq 0}$ on a homotopy $G$-algebra $V^\bullet$ such that
the Gerstenhaber bracket on $H(V^\bullet)$ is determined by $\Delta$ in a manner similar to the BV-formalism. As an application, we produce a BV-operator on the cochain complex defining the secondary Hochschild cohomology of a symmetric algebra $A$ over a commutative algebra $B$.

\end{abstract}
% ----------------------------------------------------------------

{\bf MSC 2010 Subject Classification} 16E40

{\bf Keywords: } Secondary Hochschild cohomology, Gerstenhaber bracket, BV-operators

\section{Introduction}

A Gerstenhaber algebra (see \cite{Gerst}) consists of a graded vector space $W^\bullet=\bigoplus_{n\geq 0}W^n$ equipped with the following two structures:

\smallskip
(a) A dot product $x\cdot y$ of degree zero making $W^\bullet$ into an associative graded commutative algebra.

\smallskip
(b) A bracket $[x,y]$ of degree $-1$ making $W^\bullet$ into a graded Lie algebra satisfying the compatibility property that
\begin{equation*}
[x,y\cdot z]=[x,y]\cdot z +(-1)^{(deg(x)-1)deg(y)}y\cdot [x,z]
\end{equation*} Gerstenhaber algebra structures appear in a variety of situations, from Hochschild cohomology of algebras to the exterior algebra of a Lie algebra and the algebra
of differential forms on a Poisson manifold.

\smallskip
An operator $\partial=\{\partial^n:W^n\longrightarrow W^{n-1}\}_{n\geq 0}$ on $W^\bullet$ of degree $-1$ is said to generate the Gerstenhaber bracket (see Koszul 
\cite[$\S$ 2]{Kos} and also \cite[Definition 3.2]{Kosmm}) if it satisfies
\begin{equation*}
 [x,y]= (-1)^{(deg(x)-1)deg(y)}( \partial(x)\cdot y + (-1)^{deg(x)} x \cdot  \partial(y)-\partial(x\cdot y))
\end{equation*}
In particular, a Batalin-Vilkovisky algebra (or BV-algebra) consists of a Gerstenhaber algebra along with a  generator $\partial$ for the bracket such that  $\partial^2=0$.

\smallskip
In\cite{GV0}, \cite{GV}, Gerstenhaber and Voronov introduced the notion of a homotopy $G$-algebra, which is a brace algebra equipped with a differential of degree $1$ and a dot product of degree $0$ satisfying certain conditions. In particular, the cohomology groups $H(V^\bullet)$ of a homotopy $G$-algebra $V^\bullet$ carry the structure of a Gerstenhaber algebra. 

\smallskip
In this paper, we introduce the notion of a BV-operator $\Delta=\{\Delta^n:V^n\longrightarrow V^{n-1}\}_{n\geq 0}$ on a homotopy $G$-algebra $V^\bullet$ such that
the Gerstenhaber bracket on $H(V^\bullet)$ is determined by $\Delta$ in a manner similar to the BV-formalism. More explicitly, for classes 
$\bar{f}\in H^n(V^\bullet)$ and $\bar{g}\in H^m(V^\bullet)$, we have
\begin{equation*}
[\bar f,\bar g]= (-1)^{(n-1)m}\overline{( \Delta(f)\cdot g + (-1)^n f \cdot  \Delta(g)-\Delta(f \cdot g))}\textrm{ }\in H^{m+n-1}(V^\bullet)
\end{equation*} where $f\in Z^n(V^\bullet)$, $g\in Z^m(V^\bullet)$ are cocycles representing $\bar{f}$ and $\bar{g}$ respectively. We note that $\Delta$ need not be a morphism
of cochain complexes and therefore may not induce any operator on $H(V^\bullet)$. As such, $\Delta$ may not descend to a generator for the Gerstenhaber bracket on
$H(V^\bullet)$. 

\smallskip
Our motivation is to introduce a BV-operator on the cochain complex defining the secondary Hochschild cohomology of a symmetric algebra $A$ over a commutative algebra $B$. For a datum 
$(A,B,\varepsilon)$ consisting of an algebra $A$, a commutative algebra $B$ and an extension of rings $\varepsilon : B\longrightarrow A$ such that $\varepsilon(B)\subseteq Z(A)$, 
the secondary Hochschild cohomology $H^*(A,B,\varepsilon)$ was introduced by Staic \cite{S} in order to study deformations of algebras $A[[t]]$ having a $B$-algebra structure. 

\smallskip In \cite{SS}, Staic and Stancu showed that the secondary Hochschild complex $C^*(A,B,\varepsilon)$ is a non-symmetric operad with multiplication, giving it the structure of a homotopy
$G$-algebra. Hence, the secondary cohomology $H^*(A,B,\varepsilon)$ is equipped with a graded commutative cup product and a Lie bracket which makes it a Gerstenhaber algebra. For more
on the secondary cohomology, the reader may see, for instance, \cite{AB}, Corrigan-Salter and Staic \cite{CSt}, Laubacher, Staic and Stancu \cite{LSS}.

\smallskip
Let $k$ be a field. It is well known (see Tradler \cite{T}) that the Hochschild cohomology of a finite dimensional $k$-algebra $A$ equipped with a symmetric, non-degenerate, invariant bilinear form $\langle .,.\rangle : A\times A\longrightarrow k$ carries the structure of a BV-algebra. For the terms $C^n((A,B,\varepsilon))=Hom_k(A^{\otimes n}\otimes B^{\otimes \frac{n(n-1)}{2}},A)$ in the secondary Hochschild complex, we define the BV-operator
$\Delta =\sum_{i=1}^{n+1} (-1)^{in}\Delta_i: C^{n+1}(A, B, \varepsilon)\longrightarrow C^{n}(A, B, \varepsilon)$ by the condition (see Section 3)

 $$   
<\langle  \Delta_i f \left(\otimes \left(
 \begin{array}{cccccc}
 a_1& b_{1,2}& b_{1, 3}&\ldots & b_{1, n}&\\
1& a_2 &b_{2,3}&\ldots &b_{2, n}&\\
\vdots & \vdots &\vdots & \vdots & \vdots &\\
1 & 1 & 1&\ldots & b_{n-1, n}&\\
1 & 1 & 1& \ldots & a_{n}&
\end{array}
\right) \right), a_{n+1}\rangle
$$

 $$=  \langle f \left(\otimes \left(
 \begin{array}{ccccccccc}
 a_i & b_{i, i+1} & b_{i, i+2} & \cdots b_{i, n}&1 & b_{1, i} & b_{2, i} & \ldots &b_{i-1, i}\\
 1& a_{i+1} & b_{i+1, i+2} &\cdots b_{i+1, n}&1 & b_{1, i+1}& b_{2, i+1} &\ldots &b_{i-1, i+1}\\
 \vdots & \vdots &\vdots & \vdots &\vdots & \vdots&\vdots &&\vdots \\
1& 1 & \ldots & a_n &1&  b_{1, n} & b_{2, n} &\ldots & b_{i-1, n}\\
1& 1 & \ldots & 1 & a_{n+1}&  1 & \ldots & & 1\\
1 & 1 & \ldots& 1& 1 & a_1 & b_{1,2}&  \ldots & b_{1, i-1}\\
 \vdots & \vdots &\vdots &\vdots & \vdots & \vdots &\vdots &&\vdots\\
 1&1 &\ldots &\ldots &\ldots &\ldots &\ldots& & a_{i-1}
\end{array}
\right) \right), 1\rangle
$$

Then, we show that the Gerstenhaber bracket on the secondary Hochschild cohomology of $(A,B,\varepsilon)$ is determined by $\Delta$ in a manner similar to the BV-formalism.

\section{Main Result: BV-operator on homotopy $G$-algebra}

We begin by recalling the notion of a homotopy $G$-algebra from \cite{GV}. A brace algebra (see \cite[Definition 1]{GV}) is a graded vector space $V=\bigoplus_{n\geq 0}V^n$ with a collection of multilinear operators (braces) $x\{x_1,...,x_n\}$ satisfying the following conditions (with $x\{\}$ understood to be $x$):

\begin{itemize}

\item[(1)] $deg(x\{x_1,...,x_n\})=deg(x)+\sum_{i=1}^ndeg(x_i)-n$

\item[(2)] For homogeneous elements $x$, $x_1$, ..., $x_m$, $y_1$, ...,$y_n$, we have \begin{equation*}
\begin{array}{ll}
x\{x_1,...,x_m\}\{y_1,...,y_n\}&=\underset{0\leq i_1\leq j_1\leq i_2\leq ...\leq i_m\leq j_m\leq n}{\sum}(-1)^\epsilon x\{y_1,...,y_{i_1},x_1\{y_{i_1+1},...,y_{j_1}\},y_{j_1+1},...,y_{i_m},\\
& \qquad \qquad \qquad \qquad \qquad x_m\{y_{i_m+1},...,y_{j_m}\},y_{j_m+1},...,y_n\}
\end{array}
\end{equation*} where $\epsilon =\sum_{p=1}^m |x_p|(\sum_{q=1}^{i_p}|y_k|)$ and $|x|:=deg(x)-1$.
\end{itemize} 

\begin{definition} (see \cite[Definition 2]{GV}) A homotopy $G$-algebra consists of the following data:

\smallskip

(1) A brace algebra $V=\bigoplus_{n\geq 0}V^n$. 

\smallskip
(2) A dot product  of degree zero
\begin{equation*}
V^m\otimes V^n\longrightarrow V^{m+n} \qquad x\otimes y\mapsto x\cdot y 
\end{equation*} for all $m$, $n\geq 0$. 

\smallskip
(3) A differential $d:V^\bullet\longrightarrow V^{\bullet+1}$ of degree one making $V$ into a D$G$-algebra with respect to the dot product.

\smallskip
(4) The dot product satisfies the following compatibility conditions
\begin{equation*}
(x_1\cdot x_2)\{y_1,...,y_n\}=\underset{k=0}{\overset{n}{\sum}} \textrm{ }(-1)^{\epsilon_k}( x_1\{y_1,...,y_k\})\cdot (x_2\{y_{k+1},...,y_n\})
\end{equation*} where $\epsilon_k=|x_2|\sum_{p=1}^k|y_p|$ and 
\begin{equation*}
\begin{array}{l}
d(x\{x_1,...,x_{n+1}\})-(dx)\{x_1,...,x_{n+1}\}-(-1)^{|x|}\underset{i=1}{\overset{n+1}{\mathlarger{\sum}}}(-1)^{|x_1|+...+|x_{i-1}|}x\{x_1,...,dx_i,...,x_{n+1}\}\\
= (-1)^{|x||x_1|+1}x_1\cdot x\{x_2,...,x_{n+1}\}+(-1)^{|x|}\underset{i=1}{\overset{n}{\mathlarger{\sum}}}(-1)^{|x_1|+...+|x_{i-1}|}x\{x_1,...,x_i\cdot x_{i+1},...,x_{n+1}\}-x\{x_1,...,x_n\}\cdot x_{n+1}\\
\end{array}
\end{equation*}
\end{definition}

 In particular, a homotopy $G$-algebra is equipped with a graded Lie bracket which descends to the cohomology of the corresponding cochain complex $(V^\bullet,d)$ (see \cite{GV})
\begin{equation}\label{brax}
[-,-]: H^m(V^\bullet) \otimes H^n(V^\bullet) \longrightarrow H^{m+n-1}(V^\bullet)
\end{equation}  The dot product also descends to the cohomology and the bracket with an element  becomes a graded derivation for the induced dot product on $H(V^\bullet)=\bigoplus_{n\geq 0}H^n(V^\bullet)$. In other words, the cohomology  $(H(V^\bullet),[.,.],\cdot)$ of a homotopy $G$-algebra $V^\bullet$ is canonically equipped with the structure of a Gerstenhaber algebra. 

\smallskip
We now introduce the notion of a BV-operator on a homotopy $G$-algebra.

\begin{definition}\label{BVop} Let  $V^\bullet=\underset{n\geq 0}{\bigoplus} V^n$ be a homotopy $G$-algebra, let $d:V^\bullet\longrightarrow V^{\bullet+1}$ be its differential and let $[-,-]: V^n \otimes V^m\longrightarrow V^{m+n-1}$ be  its Lie bracket. We will say that a  family $\Delta=\{\Delta^n:V^n\longrightarrow V^{n-1}\}_{n\geq 0}$ is a BV-operator on $V^\bullet$ if it satisfies
\begin{equation*}
[f,g]-(-1)^{(n-1)m} ( \Delta(f)\cdot g + (-1)^n f \cdot  \Delta(g)-\Delta(f \cdot g))\in d(V^{m+n-2})
\end{equation*} for any cocycles $f\in Z^n(V^\bullet)$, $g\in Z^m(V^\bullet)$. 

\end{definition} 

If $V^\bullet$ is a homotopy $G$-algebra equipped with a BV-operator $\Delta$, we now show that the bracket on the Gerstenhaber algebra $H(V^\bullet)$ is determined by
$\Delta$ in a manner similar to the BV-formalism. 

\begin{theorem}\label{hombv} Let  $V^\bullet=\underset{n\geq 0}{\bigoplus} V^n$ be a homotopy $G$-algebra equipped with a BV-operator  $\Delta=\{\Delta^n:V^n\longrightarrow V^{n-1}\}_{n\geq 0}$. Consider $\bar f\in H^n(V^\bullet)$ and $\bar g\in H^m(V^\bullet)$ and choose cocycles $f\in Z^n(V^\bullet)$ and $g\in Z^m(V^\bullet)$ corresponding respectively
to $\bar f$ and $\bar g$. Then, we have
\begin{equation*} ( \Delta(f)\cdot g + (-1)^n f \cdot  \Delta(g)-\Delta(f \cdot g))\textrm{ }\in Z^{m+n-1}(V^\bullet)
\end{equation*} The Gerstenhaber bracket on the cohomology of $V^\bullet$ is now determined by 
\begin{equation*}
[\bar f,\bar g]= (-1)^{(n-1)m}\overline{( \Delta(f)\cdot g + (-1)^n f \cdot  \Delta(g)-\Delta(f \cdot g))}\textrm{ }\in H^{m+n-1}(V^\bullet)
\end{equation*} In particular, the right hand side does not depend on the choice of representatives $f$ and $g$. 
\end{theorem}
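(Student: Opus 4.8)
The plan is to reduce everything to two inputs: first, that the Lie bracket makes the cochain complex $(V^\bullet, d)$ into a differential graded Lie algebra, so that $d$ is a graded derivation for $[-,-]$; and second, the defining relation of a BV-operator from Definition \ref{BVop}. Throughout I would write $\xi := \Delta(f)\cdot g + (-1)^n f \cdot \Delta(g) - \Delta(f\cdot g) \in V^{m+n-1}$ for the element appearing in the statement, and keep the witness from the BV-condition as a separate symbol $u$.

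First I would record that $[f,g]$ is already a cocycle. Since the bracket on $V^\bullet$ descends to $H(V^\bullet)$, as recalled just before \eqref{brax}, the differential $d$ is a graded derivation of the bracket, so that $d[f,g]$ equals $[df,g]$ plus $[f,dg]$ up to the usual sign. As $f$ and $g$ are cocycles, both terms on the right vanish, whence $d[f,g]=0$ and $[f,g]\in Z^{m+n-1}(V^\bullet)$; moreover the class $\overline{[f,g]}$ is by construction exactly the Gerstenhaber bracket $[\bar f,\bar g]$.

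Next, to prove $\xi\in Z^{m+n-1}(V^\bullet)$, I would invoke the BV-operator condition: by Definition \ref{BVop} there is some $u\in V^{m+n-2}$ with $[f,g]-(-1)^{(n-1)m}\xi = d(u)$. Applying $d$ and using $d^2=0$ gives $d[f,g]-(-1)^{(n-1)m}\,d\xi = 0$, and since $d[f,g]=0$ by the previous step we conclude $d\xi=0$, i.e.\ $\xi$ is a cocycle. The point worth stressing is that this argument never requires an explicit formula for $d\xi$ — the cocycle property is extracted purely from the defining relation together with the fact that $[f,g]$ is closed. This is the only step carrying real content, and I expect it to be the main, though mild, obstacle, since it rests entirely on having the DG Lie identity for $d[f,g]$ available from the Gerstenhaber--Voronov structure.

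Finally, the cohomology formula and the independence of representatives are formal consequences. The equation $[f,g]-(-1)^{(n-1)m}\xi = d(u)$ shows that $[f,g]$ and $(-1)^{(n-1)m}\xi$ differ by a coboundary, hence define the same class in $H^{m+n-1}(V^\bullet)$; combined with $\overline{[f,g]}=[\bar f,\bar g]$ this yields $[\bar f,\bar g]=(-1)^{(n-1)m}\,\overline{\xi}$. Independence of the choice of representatives $f$ and $g$ is then immediate: the left-hand side $[\bar f,\bar g]$ depends only on the classes $\bar f$ and $\bar g$, so the right-hand side, being equal to it, inherits the same independence.
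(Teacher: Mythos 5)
Your proposal is correct and takes essentially the same approach as the paper: both establish that $[f,g]\in Z^{m+n-1}(V^\bullet)$ because the bracket descends to cohomology, then use the BV-relation of Definition \ref{BVop} together with $d^2=0$ to conclude that $\Delta(f)\cdot g + (-1)^n f\cdot\Delta(g) - \Delta(f\cdot g)$ is a cocycle, and finally observe that the two sides differ by a coboundary so their classes agree. The only cosmetic difference is that you justify $d[f,g]=0$ via the graded derivation property of $d$ with respect to the bracket, whereas the paper simply cites the descent of the bracket to $H(V^\bullet)$ directly.
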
 

\begin{proof} We know that $f\in Z^n(V^\bullet)$ and $g\in Z^m(V^\bullet)$. Since the bracket $[-,-]: V^n \otimes V^m\longrightarrow V^{m+n-1}$ descends to a bracket on the cohomology, it follows that $[f,g]\in Z^{m+n-1}(V^\bullet)$. Since  $\Delta=\{\Delta^n:V^n\longrightarrow V^{n-1}\}_{n\geq 0}$ is a BV-operator, it follows from Definition \ref{BVop} that
\begin{equation}\label{2.2equ}
[f,g]-(-1)^{(n-1)m} ( \Delta(f)\cdot g + (-1)^n f \cdot  \Delta(g)-\Delta(f \cdot g))\in d(V^{m+n-2})
\end{equation} Let us put $z_1=[f,g]$ and $z_2=(-1)^{(n-1)m} ( \Delta(f)\cdot g + (-1)^n f \cdot  \Delta(g)-\Delta(f \cdot g))$. Since $z_1-z_2\in d(V^{m+n-2})$, we must
have $z_1-z_2\in  Z^{m+n-1}(V^\bullet)$. We have already seen that $z_1\in Z^{m+n-1}(V^\bullet)$. Hence, $z_2\in Z^{m+n-1}(V^\bullet)$. By \eqref{2.2equ}, we know
that $z_1-z_2$ is a coboundary and hence the cohomology classes $\bar{z}_1=\bar{z}_2$. The result is now clear. 
\end{proof}

\section{Application : BV-operator on secondary Hochschild cohomology}

Let $k$ be a field and
 $A$ be an algebra over $k$. Let $B$ be a commutative $k$-algebra and $\varepsilon:B \longrightarrow A$ be a morphism of $k$-algebras such that $\varepsilon(B) \subseteq Z(A)$, where $Z(A)$ denotes the center of $A$. Let $M$ be an $A$-bimodule such that $\varepsilon(b)m=m\varepsilon(b)$ for all $b \in B$ and $m \in M$. Following \cite[$\S$ 4.2]{S}, we consider the complex
 $(C^\bullet((A,B,\varepsilon);M),\delta^\bullet)$ whose terms are given by
  \begin{equation*}C^n((A,B,\varepsilon);M)=Hom_k(A^{\otimes n}\otimes B^{\otimes \frac{n(n-1)}{2}},M)
\end{equation*} An element in $A^{\otimes n}\otimes B^{\otimes \frac{n(n-1)}{2}}$ will be expressed  as a ``tensor matrix'' of the form
\begin{equation*}
\bigotimes
\left(
\begin{array}{ccccccc}
 a_{1}& b_{1,2} & b_{1,3} & ...&b_{1,n-1}&b_{1,n} \\
1 & a_{2}       & b_{2,3} &...&b_{2,n-1}&b_{2,n} \\
1& 1  & a_{3} &...&b_{3,n-1}      &b_{3,n} \\
. & .       &. &...&.&. \\
1& 1& 1  &...&a_{n-1}&b_{n-1,n} \\
1 & 1& 1  &...&1&a_n\\
\end{array}
\right)
\end{equation*} where $a_i\in A$ and $b_{i,j}\in B$.
The differentials \begin{equation*}\delta^n:C^n((A,B,\varepsilon);M)\longrightarrow  C^{n+1}((A,B,\varepsilon);M)
\end{equation*} 
may be described as follows
\begin{eqnarray*}&
\delta^n(f)\left(\displaystyle\bigotimes
\left(
\begin{array}{cccccccc}
 a_{1}& b_{1,2} & b_{1,3} & ...&b_{1,n-1}&b_{1,n}&b_{1,n+1}\\
1 & a_{2}       & b_{2,3} &...&b_{2,n-1}&b_{2,n}&b_{2,n+1}\\
1& 1  & a_{3} &...&b_{3,n-1}      &b_{3,n}&b_{3,n+1}\\
. & .       &. &...&.&.&.\\
1& 1& 1  &...&1&a_{n}&b_{n,n+1}\\
1 & 1& 1  &...&1&1&a_{n+1}\\
\end{array}
\right)\right)=
\end{eqnarray*}
\begin{eqnarray*}
&
a_1\varepsilon(b_{1,2}b_{1,3}...b_{1,n+1})f\left(\displaystyle\bigotimes
\left(
\begin{array}{cccccc}
 a_{2}       & b_{2,2} &...&b_{2,n}&b_{2,n+1}\\
1  & a_{3} &...    &b_{3,n}&b_{3,n+1}\\
 .       &. &...&.&.\\
1& 1  &...&a_{n}&b_{n,n+1}\\
 1& 1  &...&1&a_{n+1}\\
\end{array}
\right)\right)+&\\
&\;&\;\\
& \underset{i=1}{\overset{n}{\sum}} (-1)^{i}f\left(\displaystyle\bigotimes
\left(
\begin{array}{cccccccc}
 a_{1}       & b_{1,2}&... &b_{1,i}b_{1,i+1}&...&b_{1,n}&b_{1,n+1}\\
1  &a_{2}&...&b_{2,i}b_{2,i+1}     &...&b_{2,n}&b_{2,n+1}\\
 .       &. &...&...&...&.&.\\
  1       &1 &...&\varepsilon(b_{i,i+1})a_{i}a_{i+1}&...&b_{i,n}b_{i+1,n}&b_{i,n+1}b_{i+1,n+1}\\
   .       &. &...&...&...&.&.\\
1& 1  &...&...&...&a_{n}&b_{n,n+1}\\
 1& 1  &...&...&...&1&a_{n+1}\\
\end{array}
\right)\right)+&\\
&(-1)^{n+1}f\left(\displaystyle\bigotimes
\left(
\begin{array}{cccccc}
 a_{1}       & b_{1,2} &...&b_{1,n-1}&b_{1,n}\\
1  &a_{2} &...     &b_{2,n-1}&b_{2,n}\\
 .       &. &...&.&.\\
1& 1  &...&a_{n-1}&b_{n-1,n}\\
 1& 1  &...&1&a_{n}\\
\end{array}
\right)\right)\varepsilon(b_{1,n+1}b_{2,n+1}...b_{n,n+1})a_{n+1}&
\end{eqnarray*}
for $f \in C^n((A,B,\varepsilon);M),$ $a_i \in A,$ $b_{i,j} \in B$. The cohomology groups  of   $(C^\bullet((A,B,\varepsilon);M),\delta^\bullet)$  are known as the secondary Hochschild cohomologies $H^n((A, B, \varepsilon); M)$  of the triple $(A,B,\varepsilon)$ with coefficients in $M$ (see \cite{S}). 

\smallskip From \cite[Proposition 3.1]{SS}, we know that the secondary Hochschild complex $C^\bullet(A,B,\varepsilon):=C^\bullet((A,B,\varepsilon);A)$ carries the structure of a homotopy $G$-algebra. This induces a graded Lie bracket 
\begin{equation}
[-,-]: H^m(A,B,\varepsilon) \otimes H^n(A,B,\varepsilon) \longrightarrow H^{m+n-1}(A,B,\varepsilon)
\end{equation} on the secondary cohomology. It follows (see \cite[Corollary 3.2]{SS}) that the secondary cohomology $H^\bullet(A,B,\varepsilon)$ 
carries the structure of a Gerstenhaber algebra in the sense of  \cite{Gerst}.

\smallskip
From now onwards, we always let $A$ be a finite dimensional $k$-algebra equipped with a symmetric, non-degenerate, invariant bilinear form $\langle \cdot,\cdot \rangle: A \times A \longrightarrow k$. In particular,  $\langle a_1,a_2 \rangle=\langle a_2,a_1 \rangle$, $\langle a_1a_2,a_3 \rangle=\langle a_1,a_2a_3 \rangle$ for any $a_1,a_2,a_3 \in A$.  
For $i \in \{1, \ldots, n+1\}$, we define the maps
$\Delta_i: C^{n+1}(A, B, \varepsilon)\longrightarrow C^{n}(A, B, \varepsilon)$ as follows:
 $$   
<\langle  \Delta_i f \left(\otimes \left(
 \begin{array}{cccccc}
 a_1& b_{1,2}& b_{1, 3}&\ldots & b_{1, n}&\\
1& a_2 &b_{2,3}&\ldots &b_{2, n}&\\
\vdots & \vdots &\vdots & \vdots & \vdots &\\
1 & 1 & 1&\ldots & b_{n-1, n}&\\
1 & 1 & 1& \ldots & a_{n}&
\end{array}
\right) \right), a_{n+1}\rangle
$$

 $$=  \langle f \left(\otimes \left(
 \begin{array}{ccccccccc}
 a_i & b_{i, i+1} & b_{i, i+2} & \cdots b_{i, n}&1 & b_{1, i} & b_{2, i} & \ldots &b_{i-1, i}\\
 1& a_{i+1} & b_{i+1, i+2} &\cdots b_{i+1, n}&1 & b_{1, i+1}& b_{2, i+1} &\ldots &b_{i-1, i+1}\\
 \vdots & \vdots &\vdots & \vdots &\vdots & \vdots&\vdots &&\vdots \\
1& 1 & \ldots & a_n &1&  b_{1, n} & b_{2, n} &\ldots & b_{i-1, n}\\
1& 1 & \ldots & 1 & a_{n+1}&  1 & \ldots & & 1\\
1 & 1 & \ldots& 1& 1 & a_1 & b_{1,2}&  \ldots & b_{1, i-1}\\
 \vdots & \vdots &\vdots &\vdots & \vdots & \vdots &\vdots &&\vdots\\
 1&1 &\ldots &\ldots &\ldots &\ldots &\ldots& & a_{i-1}
\end{array}
\right) \right), 1\rangle
$$

To clarify the above operator, let us express 
\begin{equation*}
\bigotimes \left(
 \begin{array}{cccccc}
 a_1& b_{1,2}& b_{1, 3}&\ldots & b_{1, n}&\\
1& a_2 &b_{2,3}&\ldots &b_{2, n}&\\
\vdots & \vdots &\vdots & \vdots & \vdots &\\
1 & 1 & 1&\ldots & b_{n-1, n}&\\
1 & 1 & 1& \ldots & a_{n}&
\end{array}
\right) =\begin{pmatrix} 
U(i-1) & X_{12} \\
1 & U(n-i-1) \\
\end{pmatrix}
\end{equation*} where $U(k)$ is a square matrix of dimension $k$. Then, we have
\begin{equation*}
\langle  \Delta_i f\begin{pmatrix} 
U(i-1) & X_{12} \\
1 & U(n-i-1) \\
\end{pmatrix}, a_{n+1}\rangle =\langle f\begin{pmatrix}
U(n-i-1) & 1 & X_{12}^t \\
1& a_{n+1} & 1 \\
1 & 1 & U(i) \\
\end{pmatrix} ,1 \rangle
\end{equation*} where $X_{12}^t$ denotes the transpose of $X_{12}$. The operator $\Delta: C^{n+1}(A, B, \varepsilon)\longrightarrow C^{n}(A, B, \varepsilon)$ is then defined as
\begin{equation*}\Delta:= \sum_{i=1}^{n+1} (-1)^{in} \Delta_i.
\end{equation*}
  Following \cite[$\S$ 3]{SS}, we know that the complex $C^\bullet(A,B,\varepsilon)$ carries a   dot product of degree $0$, i.e., for 
  $f\in C^n(A,B,\varepsilon)$, $g\in C^m(A,B,\varepsilon)$, we have $f\cdot g\in C^{m+n}(A,B,\varepsilon)$. We also consider the operations
  \begin{equation*}
  \circ_i: C^n(A,B,\varepsilon)\otimes C^m(A,B,\varepsilon)\longrightarrow C^{m+n-1}(A,B,\varepsilon)
  \end{equation*} and set $f\circ g:=\sum_{i=1}^n(-1)^{(i-1)(m-1)}f\circ_ig$ as in \cite[$\S$ 3]{SS}. We also set 
  \begin{equation*}
  \begin{array}{c} \rho^1, \rho^2: C^{n}(A, B, \varepsilon) \otimes C^{m}(A, B, \varepsilon)\longrightarrow C^{n+m-1}(A, B, \varepsilon)\\ 
  \rho^1(f \otimes g) := \sum_{i=1}^m (-1)^{i(n+m-1)}\Delta_i(f\cdot g)\qquad 
\rho^2(f \otimes g) := \sum_{i=m+1}^{m+n} (-1)^{i(n+m-1)}\Delta_i(f\cdot g)\\
\end{array}
\end{equation*}    for $f\in C^n(A,B,\varepsilon)$, $g\in C^m(A,B,\varepsilon)$. It is clear that $\rho^1(f\otimes g)+\rho^2(f\otimes g)=\Delta(f\cdot g)$.

\begin{lemma}\label{delta}
$\rho^1(f \otimes g)=(-1)^{nm} \rho^2(g \otimes f)$ for all $f \in \Cn$ and $g \in \Cm$.
\end{lemma}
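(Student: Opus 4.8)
The plan is to prove the identity summand-by-summand after matching the two sides by a shift of the summation index. Writing both sides from their definitions gives $\rho^1(f\otimes g)=\sum_{i=1}^m(-1)^{i(n+m-1)}\Delta_i(f\cdot g)$, while $\rho^2(g\otimes f)=\sum_{i=n+1}^{n+m}(-1)^{i(m+n-1)}\Delta_i(g\cdot f)$; reindexing the latter by $i\mapsto i+n$ expresses it as a sum over $i\in\{1,\dots,m\}$. The lemma then reduces to the single clean termwise identity
\[
\Delta_i(f\cdot g)=\Delta_{i+n}(g\cdot f),\qquad 1\le i\le m,
\]
together with the elementary sign check that $nm+(i+n)(m+n-1)\equiv i(m+n-1)\pmod 2$, which holds since the difference of exponents equals $2nm+n(n-1)$, an even integer. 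So the real content is the displayed termwise identity.

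To prove it I would argue through the non-degenerate pairing: it suffices to show $\langle\Delta_i(f\cdot g)(M),a_{n+m}\rangle=\langle\Delta_{i+n}(g\cdot f)(M),a_{n+m}\rangle$ for an arbitrary tensor matrix $M$ of size $n+m-1$ and arbitrary $a_{n+m}\in A$. By the defining relation for the operators $\Delta_i$ (and its block form), each side is the value of the respective product on the $(n+m)\times(n+m)$ tensor matrix obtained by cyclically rotating the diagonal $(a_1,\dots,a_{n+m})$ to begin at position $i$ (resp.\ $i+n$) and transposing the complementary off-diagonal $b$-block, paired against $1$. I then split each big matrix according to the cup product of \cite[$\S$ 3]{SS}: $f\cdot g$ feeds its first $n$ diagonal slots to $f$ and its last $m$ to $g$, multiplying in the central factor $\varepsilon$ of all $b$-entries crossing the two blocks, and symmetrically for $g\cdot f$.

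The heart of the argument is that, under the correspondence $j=i+n$, the two splittings produce literally the same data. From the rotation, the first $n$ slots feeding $f$ in $\Delta_i(f\cdot g)$ read $a_i,\dots,a_{i+n-1}$, and the last $n$ slots feeding $f$ in $\Delta_{i+n}(g\cdot f)$ read the same sequence in the same order; likewise the $m$ slots fed to $g$ coincide, both being $a_{i+n},\dots,a_{n+m-1},a_{n+m},a_1,\dots,a_{i-1}$ in this order. One checks that the internal $b$-entries of each block also match position-by-position, so the arguments $f(F)$ and $g(G)$ are identical on both sides, while the crossing $b$-entries enter only through a product inside $\varepsilon$, hence (as $B$ is commutative) only their multiset matters, and that multiset is the same on both sides, namely the pairs with exactly one endpoint in $\{i,\dots,i+n-1\}$; call the resulting central element $\varepsilon(E_i)$. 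Thus the two evaluations are $\varepsilon(E_i)\,f(F)\,g(G)$ and $\varepsilon(E_i)\,g(G)\,f(F)$. Since $\varepsilon(B)\subseteq Z(A)$ the factor $\varepsilon(E_i)$ is central, and the trace identity $\langle xy,1\rangle=\langle x,y\rangle=\langle y,x\rangle=\langle yx,1\rangle$, valid because $\langle\cdot,\cdot\rangle$ is symmetric and invariant, forces the two pairings against $1$ to agree. This yields the termwise identity and hence, with the sign check above, the lemma.

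The main obstacle is the purely combinatorial bookkeeping in the third step: one must track, across the cyclic rotation built into $\Delta_i$ and the transposition of the off-diagonal $b$-block, exactly which $b_{p,q}$ land in the $f$-block, in the $g$-block, and in the crossing region, and confirm these coincide for $j=i+n$. Matching the diagonal entries is immediate, and matching the crossing factor is painless because only the product $\varepsilon(E_i)$ matters; the delicate point is verifying that every internal $b_{p,q}$ occupies the same position in the $f$-block (and in the $g$-block) on both sides, so that $f(F)$ and $g(G)$ are genuinely equal and not merely built from the same entries. Everything after that matching is forced by centrality of $\varepsilon(B)$ and the symmetry and invariance of the bilinear form.
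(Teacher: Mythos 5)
Your proof is correct. The paper's own ``proof'' of this lemma is the single sentence that it may be verified by direct computation, and your argument --- reindexing $i\mapsto i+n$, the parity check via $2nm+n(n-1)$, and the termwise identity $\Delta_i(f\cdot g)=\Delta_{i+n}(g\cdot f)$ obtained by matching the $f$-block, the $g$-block and the crossing $b$-entries of the two rotated tensor matrices, then invoking centrality of $\varepsilon(B)$, the trace property $\langle xy,1\rangle=\langle yx,1\rangle$ and non-degeneracy of the pairing --- is precisely that direct computation, carried out in full.
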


\begin{proof}
This may  be verified by direct computation.
\end{proof}

\begin{lemma}\label{H}
Let $f\in Z^{n}(A, B, \varepsilon) $, $g \in Z^{m}(A, B, \varepsilon) $. Then $f\circ g -(-1)^{(n-1)m} \Delta(f)\cdot g + (-1)^{(n-1)m} \rho^2(f\otimes g)$ is a coboundary. In fact, if we define $H$
\begin{equation*} H= \sum_{i, j \geq 1, i+j\leq n} (-1)^{(j-1)(m-1) + i(n+m)+1} \Delta_i(f\circ_j g),\end{equation*}
then, \begin{equation*}
\delta H= f\circ g -(-1)^{(n-1)m} \Delta(f) \cdot g + (-1)^{(n-1)m} \rho^2(f\otimes g).
\end{equation*}
\end{lemma}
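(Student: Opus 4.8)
The plan is to prove the displayed formula by differentiating the explicit cochain $H$ and reorganizing the resulting terms. Since $H=\sum_{i,j\geq 1,\,i+j\leq n}(-1)^{(j-1)(m-1)+i(n+m)+1}\Delta_i(f\circ_j g)$, I would begin from $\delta H=\sum_{i,j}(-1)^{(j-1)(m-1)+i(n+m)+1}\,\delta\bigl(\Delta_i(f\circ_j g)\bigr)$ and attack each summand $\delta(\Delta_i h)$ with $h=f\circ_j g\in C^{n+m-1}(A,B,\varepsilon)$. The essential ingredient is a \emph{commutation formula} between the coboundary $\delta$ and the rotation operators $\Delta_i$. Because $\Delta_i$ is defined through the invariant pairing $\langle\cdot,\cdot\rangle$ by a cyclic rearrangement of the tensor matrix (with the pairing slot inserted in the middle and the trailing block transposed into the front), one evaluates $\langle\delta(\Delta_i h)(\cdots),a\rangle$ by substituting the explicit three-part formula for the secondary differential $\delta^{n+m-2}$ and then transporting the argument $a$ around the matrix using the symmetry $\langle x,y\rangle=\langle y,x\rangle$ and the invariance $\langle xy,z\rangle=\langle x,yz\rangle$ of the form.

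Carrying this out should express $\delta(\Delta_i h)$ as a signed combination of terms of the form $\Delta_{i'}(\delta h)$ together with \emph{boundary} contributions that arise precisely when the cyclic rotation crosses the interface between the pairing slot and the first entry; these boundary terms are where the left multiplication by $\varepsilon(b_{1,\bullet})a_1$ and the right multiplication by $\varepsilon(b_{\bullet,n+1})a_{n+1}$ in $\delta$ are converted, via invariance of $\langle\cdot,\cdot\rangle$, into dot products of $h$ with the adjacent diagonal entry. I would record this as a standalone cochain-level identity before specializing. Next I substitute $h=f\circ_j g$ and invoke the hypothesis that $f,g$ are cocycles: in the compatibility condition (4) in the definition of a homotopy $G$-algebra, the terms $(\delta f)\{\cdots\}$ and $f\{\cdots,\delta g,\cdots\}$ drop out, so $\delta(f\circ_j g)$ collapses to the structural boundary terms of that identity, namely signed dot products $g\cdot f$, $f\cdot g$ together with shifted re-insertions $f\circ_{j'}g$.

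Finally I would assemble the double sum. The interior re-insertion terms, controlled by the triangular index range $i+j\leq n$ together with the sign $(-1)^{(j-1)(m-1)+i(n+m)+1}$, are designed to telescope: adjacent $(i,j)$ contributions cancel in pairs, leaving only the extreme terms of the rotation, which reassemble into $f\circ g=\sum_j(-1)^{(j-1)(m-1)}f\circ_j g$. The surviving boundary contributions group into the two remaining terms: those in which $\Delta_i$ lands entirely inside the $f$-block combine into $-(-1)^{(n-1)m}\Delta(f)\cdot g$, while those in which $\Delta_i$ sweeps across the $f\cdot g$ juncture reassemble, after using Lemma~\ref{delta} to trade $\rho^1$ for $\rho^2$, into $(-1)^{(n-1)m}\rho^2(f\otimes g)$. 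I expect the main obstacle to be essentially bookkeeping: tracking the sign convention $\epsilon=\sum|x_p|(\sum|y_q|)$ through the rotation and verifying that the telescoping in the triangular region is exact, so that no stray re-insertion or boundary term survives. Matching the residual boundary terms precisely against the ranges $1\leq i\leq m$ and $m+1\leq i\leq m+n$ that define $\rho^1$ and $\rho^2$ is the delicate point, and it is here that the specific sign $(-1)^{i(n+m)+1}$ appearing in $H$ and the symmetry of Lemma~\ref{delta} must conspire correctly.
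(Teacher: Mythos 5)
Your proposal follows essentially the same route as the paper's own proof: there, each $\delta(\Delta_i(f\circ_j g))$ is expanded explicitly through the invariant pairing, the resulting terms are grouped (using $\delta g=0$) into quantities $A_{i,j}$, $B_{i,j}$, $C_{i,j}$, and the signed combinations $(-1)^{i+1}A_{i,j-1}+(-1)^{i+m}B_{i,j}+(-1)^{i+n}C_{i-1,j}$ are identified with $\Delta_i((\delta f)\circ_j g)$ or with pairings against $\delta f$, hence vanish --- exactly your ``commutation of $\delta$ past $\Delta_i$ up to terms killed by the cocycle conditions'' --- so that summing over the triangular index region leaves precisely $f\circ g$, $-(-1)^{(n-1)m}\Delta(f)\cdot g$ and $(-1)^{(n-1)m}\rho^2(f\otimes g)$ as boundary contributions. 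The only fine-print deviations are that the paper's cancellation runs in triples indexed by $(i,j-1)$, $(i,j)$, $(i-1,j)$ rather than in adjacent pairs, and that the $\rho^2(f\otimes g)$ terms emerge directly from the extreme terms $B_{i,n-i+1}$, with no appeal to Lemma~\ref{delta} (which the paper invokes only later, in the proof of Proposition~\ref{bvopx}).
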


\begin{proof}
We set, for $k\geq 0$, $p\geq 0$:
\begin{equation*}
T^k_{k+p}=\bigotimes \begin{pmatrix}
a_{k+1} & \dots & b_{k+1,k+p}\\
\vdots & & \vdots \\
1 & \dots & a_{k+p}
\end{pmatrix}
\end{equation*}
We see that

 \begin{equation}\label{delDel}\small 
\langle \delta(\Delta_i (f \circ_j g))\left(\otimes \left(
 \begin{array}{ccccccc} 
 a_1& b_{1,2}& b_{1, 3}&\ldots & b_{1, n+m-1}&\\
1& a_2 &b_{2,3}&\ldots &b_{2, n+m-1}&\\
\vdots & \vdots &\vdots & \vdots & \vdots &\\
1 & 1 & 1&\ldots & b_{n+m-2, n+m-1}&\\
1 & 1 & 1& \ldots & a_{n+m-1}&
\end{array}
\right) \right), a_{n+m}\rangle=
\end{equation}
\begin{equation*}\tiny \langle f \left(
 \begin{array}{ccccccccc} 
a_{i+1} & \ldots & b_{i+1, i+j-1} & \prod \limits_{k=0}^{m-1} b_{i+1, i+j+k} & \ldots & 1&b_{2, i+1} &\ldots &b_{i-1, i+1}\\
%1 & a_{i+2} & \ldots & b_{i+2, i+j-1} & \prod \limits_{k=0}^{m-1} b_{i+2, i+j+k}& \ldots & &1 & b_{2, i+2}\ldots b_{i-1, i+2}\\
\vdots &\vdots & \vdots & \vdots &\vdots && \vdots& \vdots\\
1 &  \ldots & a_{i+j-1} &  \prod \limits_{k=0}^{m-1} b_{i+j-1, i+j+k}& \ldots & 1 & b_{2, i+j-1} &\ldots &b_{i-1, i+j-1}\\
1 &  \ldots & 1 & g(T^{i+j-1}_{i+j+m-1})& \prod\limits_{k=0}^{m-1} b_{i+j+k, i+j+m}  &1 &\ldots  &\ldots &\prod \limits_{k=0}^{m-1} b_{i-1, i+j+k}\\
\vdots &\vdots & \vdots & \vdots & \vdots &\vdots\\
%1 & \ldots & 1 & 1 & 1 & &a_{n+m-1} &1 & b_{2, n+m-1} \ldots  b_{i-1, n+m-1}\\
1 & \ldots & 1 &  1&  1 & \alpha & 1 & \ldots &1\\
1 & \ldots & 1 &  1&  1 & 1  & a_2   &\ldots & b_{2, i}\\
\vdots &\vdots & \vdots & \vdots &\vdots && \vdots & \vdots\\
1 & \ldots & 1 &  1&  1 & 1  & 1 &\ldots & a_i\\
\end{array}
\right), 1\rangle
\end{equation*} 
\begin{equation*}+\sum_{\lambda=1}^{i-1}(-1)^\lambda~  
\tiny \langle f\left(
 \begin{array}{cccccccccc} 
a_{i+1}& \ldots & &\prod\limits_{k=0}^{m-1}b_{i+1, i+j+k}& \ldots &1& b_{1, i+1} & b_{\lambda,i+1}b_{\lambda+1,i+1} & b_{i, i+1}\\
\vdots & \vdots & & \vdots & \vdots &\vdots & \vdots & \vdots&  \vdots\\
1& \cdots &  & \prod\limits_{k=0}^{m-1}b_{i+j-1, i+j+k}& \ldots & 1 & b_{1, i+j-1} & b_{\lambda, i+j-1} b_{\lambda+1, i+j-1} & b_{i, i+j-1}\\
1 & \cdots & & g(T^{i+j-1}_{i+j+m-1}) &  \prod\limits_{k=0}^{m-1} b_{i+j+k, i+j+m}&  \ldots & \prod\limits_{k=0}^{m-1} b_{1, i+j+k}&  \prod\limits_{k=0}^{m-1} b_{\lambda,i+j+k} b_{\lambda+1,i+j+k} &\prod\limits_{k=0}^{m-1} b_{i, i+j+k}\\
\vdots & \vdots && \vdots & \vdots &1 & \vdots & \vdots&  \vdots\\
1 & 1 & & \ldots & \ldots & a_{n+m}& 1 & \ldots & 1 \\
1 & 1 &  & \ldots & \ldots & 1& a_1 & \ldots & b_{1. i} \\
\vdots & \vdots && \vdots & \vdots &\vdots & \vdots & \vdots&  \vdots\\
1 & 1 &\ldots & 1 & 1 & 1 & 1 & \beta_\lambda& \ldots b_{\lambda, i}b_{\lambda+1, i} \\
\vdots & \vdots && \vdots & \vdots &\vdots & \vdots & \vdots&  \vdots\\
1 & 1 & \ldots & 1 &1 & 1& 1 & 1 &a_i
\end{array}
 \right), 1\rangle
\end{equation*}

$$ +\sum_{\lambda=i}^{i+j-2}(-1)^\lambda~ \tiny \langle f\left(
 \begin{array}{ccccccccccc} 
a_i & b_{i,i+1}& b_{i,\lambda}b_{i,\lambda+1} &\prod\limits_{k=0}^{m-1}b_{i, i+j+k} & \ldots &1 & b_{1,i} &\ldots &b_{i-1, i}\\
1 & a_{i+1} &b_{i+1,\lambda}b_{i+1,\lambda+1}   &\prod\limits_{k=0}^{m-1}b_{i+1, i+j+k} & \ldots &1& b_{1,i+1} &\ldots & b_{i-1, i+1}\\
1 & 1 &\vdots& \vdots &\vdots &&\vdots&  \vdots &\vdots \\
1 &  \ldots &\beta_\lambda&\prod\limits_{k=0}^{m-1}b_{\lambda, i+j+k}b_{\lambda+1, i+j+k} &\ldots &1 &b_{1, \lambda}  &\ldots & b_{i-1, \lambda}\\
\vdots & \vdots& \vdots & \vdots & \vdots & \vdots&  \vdots\\
1 & \ldots& &\prod\limits_{k=0}^{m-1}b_{i+j-1, i+j+k} & \ldots&1&b_{1, i+j-1} &   & b_{i-1, i+j-1}\\
1 & \ldots& \ldots & g(T^{i+j-1}_{i+j+m-1}) &  \prod\limits_{k=0}^{m-1}b_{i+j+k, i+j+m} &1 & \ldots & \ldots &\prod\limits_{k=0}^{m-1}b_{i-1,i+j+k}\\
1 & \ldots & \ldots & 1 & 1 & a_{m+n} & 1 & \ldots &1 \\
1 & \ldots & \ldots & 1 & 1 & 1 & a_1 & \ldots &b_{1,i-1}\\
\vdots & \vdots&\vdots &\vdots & \vdots & && \vdots & \vdots& \\
1 & \ldots& \ldots &1&\ldots & 1 & 1 &  \ldots &a_{i-1} \\
\end{array}
 \right), 1\rangle
$$

$$  +\sum_{\lambda=i+j-1}^{i+j+m-2}(-1)^\lambda~ \tiny \langle f\left(
 \begin{array}{ccccccccccc} 
a_i & \ldots & \prod\limits_{k=-1}^{m-2}b_{i, i+j+k} & \ldots & 1 & b_{1, i}& \ldots & b_{i-1, i}\\
\vdots & \vdots &\vdots & \vdots & \vdots & \vdots & \vdots &  \vdots\\
1 & a_{i+j-2}& \prod\limits_{k=-1}^{m-2}b_{i+j-2, i+j+k} & \ldots & 1 & b_{1, i+j-2} &\ldots & b_{i-1, i+j-2}\\
1 & 1 & g(\bar{T}^{i+j-2}_{i+j+m-2}) & \prod\limits_{k=-1}^{m-2} b_{i+j+k, i+j+m-1}& 1 &\ldots & \ldots & \prod\limits_{k=-1}^{m-2}b_{i-1,i+j+k}\\
\vdots & \vdots &\vdots & \vdots & 1 & \vdots & \vdots &  \vdots\\
1 & 1 & 1 & \ldots & a_{n+m} & 1 & 1 & \ldots   1\\
1 & 1 & 1 & \ldots & 1& a_1& b_{1, 2} & \ldots  b_{1, i-1}\\
\vdots & \vdots &\vdots & \vdots & 1 & \vdots & \vdots \\
1 & \ldots &1 & \ldots & 1& 1&\ldots&  a_{i-1}\\
\end{array}
 \right), 1\rangle
$$

$$+\sum_{\lambda=i+j+m-1}^{n+m-2}(-1)^\lambda~    \tiny \langle f\left(
 \begin{array}{ccccccccccc} 
a_i & b_{i, i+1}& \ldots &b_{i, i+j-2} & \prod\limits_{k=-1}^{m-2} b_{i, i+j+k} & \ldots  &  & 1 &\ldots & b_{i-1, i}\\
\vdots & \vdots &\vdots & \vdots & \vdots& \vdots &  &    1 & \vdots &  \vdots\\
1& \ldots & \ldots &a_{i+j-2}& \prod\limits_{k=-1}^{m-2} b_{i+j-2, i+j+k} & \ldots &  &1 & \ldots & b_{i-1, i+j-2}\\
1 & \ldots & \ldots  &g(T^{i+j-2}_{i+j+m-2}) &  \prod\limits_{k=-1}^{m-2} b_{i+j+k, i+j+m-1}& \ldots && 1& \ldots& \prod\limits_{k=-1}^{m-2} b_{i-1, i+j+k}\\
1 & \ldots & \ldots & 1 &1 &  \beta_\lambda & & 1& \ldots & b_{i-1,\lambda}b_{i-1, \lambda+1}\\
\vdots & \vdots &\vdots & \vdots & \vdots& \vdots &  &    1 & \vdots &  \vdots\\
1 & \ldots & \ldots & \ldots & \ldots & \ldots && a_{n+m}& 1 \ldots &1\\
1 & 1 & 1 & \ldots & \ldots  &\ldots & &\ldots & a_1 & \ldots  b_{1, i-1}\\
\vdots & \vdots &\vdots & \vdots & \vdots& \vdots &  &   \vdots& \vdots &  \vdots\\
1 & 1 & 1 & \ldots & \ldots  &\ldots & &\ldots & 1 & \ldots a_{i-1}\\

\end{array}
 \right), 1\rangle
$$

$$+(-1)^{n+m-1}~  \tiny \langle f\left(
 \begin{array}{ccccccccccc} 
a_i & b_{i, i+1}& \ldots &b_{i, i+j-2} & \prod\limits_{k=-1}^{m-2} b_{i, i+j+k} & \ldots  &  & 1 &\ldots & b_{i-1, i}\\
\vdots & \vdots &\vdots & \vdots & \vdots& \vdots &  &    1 & \vdots &  \vdots\\
1& \ldots & \ldots &a_{i+j-2}& \prod\limits_{k=-1}^{m-2} b_{i+j-2, i+j+k} & \ldots &  &1 & \ldots & b_{i-1, i+j-2}\\
1 & \ldots & \ldots  &g(T^{i+j-2}_{i+j+m-2}) &  \prod\limits_{k=-1}^{m-2} b_{i+j+k, i+j+m-1}& \ldots && 1& \ldots& \prod\limits_{k=-1}^{m-2} b_{i-1, i+j+k}\\
\vdots & \vdots &\vdots & \vdots & \vdots& \vdots &  &    1 & \vdots &  \vdots\\
1 & \ldots & \ldots & \ldots & \ldots & \ldots &&\gamma& 1 &\ldots 1\\
1 & \ldots & \ldots & \ldots & \ldots  &\ldots & &\ldots & a_1 & \ldots  b_{1, i-1}\\
\vdots & \vdots &\vdots & \vdots & \vdots& \vdots &  &   \vdots& \vdots &  \vdots\\
1 & \ldots& \ldots& \ldots & \ldots  &\ldots & &\ldots & 1 & \ldots a_{i-1}\\

\end{array}
 \right), 1\rangle
$$

where $\alpha:= \varepsilon(b_{1, 2}\cdots b_{1, n+m-1})a_{n+m}a_1,$  $\gamma:=\varepsilon(b_{1, n+m-1} \cdots b_{n+m-2, n+m-1}) a_{n+m} a_{n+m-1}$, $\beta_\lambda:= \varepsilon(b_{\lambda, \lambda+1}) a_\lambda a_{\lambda+1}$ for $1 \leq \lambda \leq n+m-2$ and 
$$\bar{T}^{i+j-2}_{i+j+m-2}:=\left(
 \begin{array}{cccccccc}
a_{i+j-1} & b_{i+j-1, i+j} &\ldots & b_{i+j-1,\lambda}b_{i+j-1,\lambda+1}&& b_{i+j-1, i+j+m-2}\\
1 & a_{i+j} & \ldots&b_{i+j,\lambda}b_{i+j,\lambda+1} & &  b_{i+j, i+j+m-2}\\
1 &1 &\vdots & & & &\\
1&1&\ldots &\varepsilon(b_{\lambda, \lambda+1})a_\lambda a_{\lambda+1}& \ldots &b_{\lambda,i+j+m-2}b_{\lambda +1,i+j+m-2}\\
\vdots &&\vdots& \vdots & \vdots & \vdots\\
\ldots & &\ldots&\ldots & \ldots & a_{i+j+m-2}
\end{array}
 \right).$$

We write the entire expression of \ref{delDel} as $$\langle \delta(\Delta_i (f \circ_j g))\left(\otimes \left(
 \begin{array}{ccccccc} 
 a_1& b_{1,2}& b_{1, 3}&\ldots & b_{1, n+m-1}&\\
1& a_2 &b_{2,3}&\ldots &b_{2, n+m-1}&\\
\vdots & \vdots &\vdots & \vdots & \vdots &\\
1 & 1 & 1&\ldots & b_{n+m-2, n+m-1}&\\
1 & 1 & 1& \ldots & a_{n+m-1}&
\end{array}
\right) \right), a_{n+m}\rangle=
E_1 + E_2 + E_3 + E_4 + E_5 + E_6,$$ 
where $E_k$ denotes the $k$-th term in the expression.

\smallskip
We set for $i$, $j\geq 1$ and $i+j\leq n$,
$$
A_{i,j}:= \tiny{(-1)^{i+1}\langle
a_i' f \tiny \left(
\begin{array}{ccccccccccc}
a_{i+1} &\ldots & b_{i+1, i+j-1}& \prod\limits_{k=0}^{m-1} b_{i+1, i+j+k} & \ldots    && 1 &\ldots &\ldots & b_{i-1, i+1}\\
\vdots & \vdots & \vdots & \vdots& \vdots &  &    1 & \vdots & \ldots &  \vdots\\
1& \ldots & a_{i+j-1}& \prod\limits_{k=0}^{m-1} b_{i+j-1, i+j+k} & \ldots &  &1 & \ldots & \ldots &b_{i-1, i+j-1}\\
1 & \ldots  &&g(T^{i+j-1}_{i+j+m-1}) &  \prod\limits_{k=0}^{m-1} b_{i+j+k, i+j+m}& \ldots & 1& \ldots& \ldots &\prod\limits_{k=0}^{m-1} b_{i-1, i+j+k}\\
\vdots & \vdots & \vdots & \vdots& \vdots &  &    1 & \vdots &  \ldots &\vdots\\
1 & \ldots & \ldots & \ldots & \ldots && a_{n+m}& 1  &\ldots &1\\
1 & 1 & \ldots & \ldots  &\ldots & &\ldots & a_1 &   \ldots &b_{1, i-1}\\
\vdots & \vdots & \vdots & \vdots& \vdots &  &   \vdots& \vdots & \ldots & \vdots\\
1 & 1 & \ldots & \ldots  &\ldots & &\ldots & 1 &  \ldots &a_{i-1}\\
\end{array}
 \right), 1\rangle}
$$ 

 $$+ E_3 +(-1)^{i+j-1} \tiny \langle f \left(
\begin{array}{cccccccccc}
a_{i} &\ldots & b_{i, i+j-2}& \prod\limits_{k=-1}^{m-2} b_{i, i+j+k} & \ldots    && 1 &\ldots & \ldots & b_{i-1, i}\\
\vdots & \vdots & \vdots & \vdots& \vdots &  &    1 & \vdots &  \ldots &\vdots\\
1& \ldots &a_{i+j-2}& \prod\limits_{k=-1}^{m-2} b_{i+j-2, i+j+k} & \ldots &  &1 & \ldots & \ldots &b_{i-1, i+j-2}\\
1 & \ldots   &\ldots &\eta &  \prod\limits_{k=0}^{m-1} b_{i+j+k, i+j+m}& \ldots & 1& \ldots& \ldots &\prod\limits_{k=0}^{m-1} b_{i-1, i+j+k}\\
\vdots & \vdots &\vdots & \vdots & \vdots& \vdots & 1 & \vdots & \ldots & \vdots\\
1 & \ldots & \ldots & \ldots & \ldots & \ldots & a_{n+m}& 1  &\ldots &1\\
1 & 1 & 1 & \ldots & \ldots  &\ldots & & a_1 & \ldots &  b_{1, i-1}\\
\vdots & \vdots &\vdots & \vdots & \vdots& \vdots &  &   \vdots& \vdots & \vdots \\
1 & 1 & 1 & \ldots & \ldots  &\ldots & &\ldots  &   \ldots &a_{i-1}\\
\end{array}
 \right), 1\rangle
$$

where
$a_i'= a_i \varepsilon(b_{i, i+1}\cdots b_{i, n+m-1}b_{1,i}\cdots b_{i-1,i}),$. We also set

$$B_{i, j}:= (-1)^{i+j+m-2} f \tiny \left(
\begin{array}{cccccccccc}
a_{i} &\ldots & b_{i, i+j-2} & \prod\limits_{k=-1}^{m-2} b_{i, i+j+k} & \ldots    && 1 &\ldots & \ldots &b_{i-1, i}\\
\vdots & \vdots & \vdots & \vdots& \vdots &  &    1 & \vdots &  \ldots &\vdots\\
1& \ldots &a_{i+j-2}& \prod\limits_{k=-1}^{m-2} b_{i+j-2, i+j+k} & \ldots &  &1 & \ldots &\ldots & b_{i-1, i+j-2}\\
1 & \ldots  &\ldots  &\zeta &  \prod\limits_{k=-1}^{m-2} b_{i+j+k, i+j+m-1}& \ldots & 1& \ldots& \ldots &\prod\limits_{k=-1}^{m-2} b_{i-1, i+j+k}\\
\vdots & \vdots &\vdots & \vdots & \vdots& \vdots & 1 & \vdots & \ldots & \vdots\\
1 & \ldots & \ldots & \ldots & \ldots & \ldots &a_{n+m}& 1  &\ldots &1\\
1 & 1 & 1 & \ldots & \ldots  &\ldots &\ldots & a_1 & \ldots &b_{1, i-1}\\
\vdots & \vdots &\vdots & \vdots & \vdots& \vdots &  &   \vdots& \vdots &  \vdots\\
1 & 1 & 1 & \ldots & \ldots  &\ldots & &\ldots & \ldots &a_{i-1}\\
\end{array}
 \right), 1\rangle +E_5+E_6
$$

where $$\eta=\tiny{a_{i+j-1} \varepsilon(b_{i+j-1, i+j} \cdots b_{i+j-1, n+m-1})} g\left(T^{i+j-1}_{i+j+m-1}\right)
$$ 
$$\zeta= g\left(T^{i+j-2}_{i+j+m-2}\right) \varepsilon\left(\prod\limits_{k=-1}^{m-2}b_{i+j+k, i+j+m-1}\right),
$$  and 

\smallskip
$$C_{i, j}:= (-1)^{i}\tiny \langle f \left(
\begin{array}{cccccccccc}
a_{i+1} &\ldots & b_{i+1, i+j-1}& \prod\limits_{k=0}^{m-1} b_{i+1, i+j+k} & \ldots    && 1 &\ldots & b_{i-1, i+1}\\
\vdots & \vdots & \vdots & \vdots& \vdots &  &    1 & \vdots &  \vdots\\
1& \ldots &a_{i+j-1}& \prod\limits_{k=0}^{m-1} b_{i+j-1, i+j+k} & \ldots &  &1 & \ldots & b_{i-1, i+j-1}\\
1 & \ldots  &\ldots&g(T^{i+j-1}_{i+j+m-1}) &  \prod\limits_{k=0}^{m-1} b_{i+j+k, i+j+m}& \ldots & 1& \ldots& \prod\limits_{k=0}^{m-1} b_{i-1, i+j+k}\\
\vdots & \vdots &\vdots & \vdots& \vdots &  &    1 & \vdots &  \vdots\\
1 & \ldots & \ldots & \ldots & \ldots && a_{n+m}& 1  &1\\
1 & 1 & \ldots & \ldots  &\ldots & &\ldots & a_1 &   b_{1, i-1}\\
\vdots & \vdots& \vdots & \vdots& \vdots &  &   \vdots& \vdots &  \vdots\\
1 & 1 & \ldots & \ldots  &\ldots & &\ldots & 1 &  a_{i-1}\\
\end{array}
 \right) a_i', 1\rangle +E_1 +E_2
$$

The first term of $A_{i,j}$ and that of $C_{i,j}$ are the same modulo a sign. Using the fact that $\delta g=0$, the third term of $A_{i,j}$ and the first term of $B_{i,j}$ add up to give $E_4$. Thus, we have
 \begin{equation}\label{ABC0}
\langle  \delta(\Delta_i (f \circ_j g))\left(\otimes \left(
 \begin{array}{ccccccc} 
 a_1& b_{1,2}& b_{1, 3}&\ldots & b_{1, n+m-1}&\\
1& a_2 &b_{2,3}&\ldots &b_{2, n+m-1}&\\
\vdots & \vdots &\vdots & \vdots & \vdots &\\
1 & 1 & 1&\ldots & b_{n+m-2, n+m-1}&\\
1 & 1 & 1& \ldots & a_{n+m-1}
\end{array}
\right) \right), a_{n+m} \rangle= A_{i,j} + B_{i,j} + C_{i,j}
\end{equation}

It may be verified that

$(-1)^{i+1}A_{i, j-1} + (-1)^{i+m}B_{i, j}+ (-1)^{i+n}C_{i-1, j}=$
\begin{equation}\label{ABC1}
<\Delta_i ((\delta f) \circ_j g))\left(\otimes \left(\\
\begin{array}{cccccc} 
 a_1& b_{1,2}& b_{1, 3}&\ldots & b_{1, n+m-1}&\\
1& a_2 &b_{2,3}&\ldots &b_{2, n+m-1}&\\
\vdots & \vdots &\vdots & \vdots & \vdots &\\
1 & 1 & 1& \ldots & a_{m+n-1}&
\end{array}
\right) \right), a_{n+m}>\\
=0
\end{equation}
for $2 \leq i \leq n$, $2 \leq j \leq n-1$ and $i+j \leq n$. The second equality in \eqref{ABC1} uses the fact that $\delta f=0$.

For $i, j \in \{1, \ldots , n+1\}$, we define
$$
A_{i, 0} :=  (-1)^{i+1}\langle g\left(T^{i-1}_{i+m-1}
\right)
f\left(
\begin{array}{cccccccccc}
 a_{i+m} & b_{i+m, i+m+1}  & \cdots b_{i+m, n+m-1}&1 & b_{1, i+m} & b_{2, i+m} \cdots &b_{i-1, i+m}\\
 1& a_{i+m+1}  &\cdots b_{i+m+1, n+m-1}&1 & b_{2, i+m+1}& b_{2, i+m+1} \ldots &b_{i-1, i+m+1}\\
 \vdots & \vdots & \vdots &\vdots & \vdots&\vdots &\vdots \\
1& 1 & \ldots & a_{n+m}&  1 & \ldots & 1\\
1 & 1 & \ldots& 1& a_1 & b_{1,2}  \ldots & b_{1, i-1}\\
 \vdots & \vdots &\vdots &\vdots & \vdots & \vdots &\vdots \\
 1&1 &\ldots &\ldots &\ldots &\ldots & a_{i-1}
\end{array}
\right) , 1\rangle,
$$

$$ C_{0, j}= \langle f\left(
\begin{array}{cccccccccc} 
a_1 & \ldots & b_{1, j-1} & \prod\limits_{k=0}^{m-1}b_{1, j+k}  & b_{1, j+m}& \ldots & b_{1, n+m-1}\\
 \vdots & \vdots & \vdots &\vdots &\vdots  & \vdots & \vdots \\
1 & \ldots & a_{j-1} & \prod\limits_{k=0}^{m-1} b_{j-1, j+k} & b_{j-1, j+m}& \ldots& b_{j-1, n+m-1}\\
1 & \ldots & 1& g(T^{j-1}_{j+m-1}) & \prod\limits_{k=0}^{m-1}b_{j+k, j+m} & \ldots &\prod\limits_{k=0}^{m-1}b_{j+k, n+m-1} \\
1 & \ldots & 1 & 1 & a_{j+m} & \ldots& b_{j+m, n+m-1}\\
 \vdots & \vdots & \vdots &\vdots &\vdots  & \vdots & \vdots \\
1 & \ldots & \ldots & \ldots & \ldots & \ldots & a_{n+m-1}
\end{array}
\right) a_{n+m}, 1\rangle,
$$
 and for $i \in \{1, \ldots, n\}$, define

$$B_{i, n-i+1} :=  (-1)^{n+m+1} \langle \tiny f\left(
\begin{array}{cccccccccc} 
a_{i} &\ldots & b_{i, n-1}& \prod\limits_{k=1}^{m} b_{i, n-1+k} & b_{1, i}& \ldots &b_{i-1, i}\\
 \vdots & \vdots & \vdots&\vdots &\vdots & \vdots & \vdots\\
1 & \ldots &a_{n-1}&  \prod\limits_{k=1}^{m} b_{n-1, n-1+k} & b_{1, n-1} & \ldots &b_{i-1, n-1}\\
1 & \ldots & 1 & g(T^{n-1}_{n+m-1})\cdot a_{n+m} & \prod\limits_{k=0}^{m-1} b_{1, n+k} & \ldots &  \prod\limits_{k=0}^{m-1} b_{i-1, n+k}\\
1 & \ldots & 1 & 1 & a_1& \ldots & b_{1, i-1}\\
 \vdots & \vdots & \vdots&\vdots &\vdots & \vdots & \vdots\\
1 & \ldots & \ldots & \ldots & \ldots & \ldots & a_{i-1}

\end{array}
\right), 1\rangle.
$$

Thus, $A_{i, j}$, $B_{i, j+1}$, $C_{i-1, j}$ are defined for all the values of $i, j$ with $i,j\geq 1$ and $i+j \leq n+1$. Moreover, it may be verified that

$A_{1,j-1}+ (-1)^{m+1} B_{1,j} + (-1)^{n+1}C_{0,j}=$
$$ \langle (\delta f)\left(
\begin{array}{cccccccccc} 
a_1 & \ldots & b_{1, j-1} & \prod\limits_{k=0}^{m-1}b_{1, j+k}  & b_{1, j+m}& \ldots & b_{1, n+m-1}&1\\
 \vdots & \vdots & \vdots &\vdots &\vdots  & \vdots & \vdots&1 \\
1 & \ldots & a_{j-1} & \prod\limits_{k=0}^{m-1} b_{j-1, j+k} & b_{j-1, j+m}& \ldots& b_{j-1, n+m-1}&1 \\
1 & \ldots & 1& g(T^{j-1}_{j+m-1}) & \prod\limits_{k=0}^{m-1}b_{j+k, j+m} & \ldots &\prod\limits_{k=0}^{m-1}b_{j+k, n+m-1}&1 \\
1 & \ldots & 1 & 1 & a_{j+m} & \ldots& b_{j+m, n+m-1}&1\\
 \vdots & \vdots & \vdots &\vdots &\vdots  & \vdots & \vdots&1 \\
1 & \ldots & \ldots & \ldots & \ldots & \ldots & a_{n+m-1}&1\\
1& \ldots & 1 & 1 & 1& 1& 1& a_{n+m}
\end{array}
\right), 1 \rangle=0$$

We also have

$(-1)^{i+1}A_{i,0}+ (-1)^{i+m} B_{i,1} + (-1)^{i+n}C_{i-1,1}=$
$$ \langle (\delta f)\left(
\begin{array}{cccccccccc} 
g(T^{i-1}_{i+m-1})& \prod\limits_{k=0}^{m-1}b_{i+k, i+m}& \ldots & \prod\limits_{k=0}^{m-1}b_{i+k, n+m-1} &1& \prod\limits_{k=0}^{m-1}b_{1,i+k} & \ldots &  \prod\limits_{k=0}^{m-1}b_{i-1,i+k} \\
1 & a_{i+m} & \ldots   & b_{i+m,n+m-1} &1 &b_{1,i+m} &\ldots&  \prod\limits_{k=0}^{m-1}b_{i-1,i+m}   \\
\ldots & \ldots & \ldots & \ldots&1& \ldots & \ldots& \ldots\\
\ldots & \ldots & \ldots & a_{n+m-1}&1&b_{1,n+m-1} &\ldots &b_{i-1,n+m-1}\\
\ldots & \ldots & \ldots & 1&a_{n+m}&1& \ldots& 1\\
\ldots & \ldots & \ldots & 1&1&a_1 & \ldots& b_{1,i-1}\\
\ldots & \ldots & \ldots & \ldots&\ldots &\ldots & \ldots& \ldots\\
\ldots & \ldots & \ldots & 1&1&1 & \ldots& a_{i-1}\\
\end{array}
\right), 1 \rangle=0$$

and 

$(-1)^{i+1}A_{i, n-i} +(-1)^{i+m}B_{i, n-i+1} + (-1)^{i+n}C_{i-1, n-i+1}=$
$$
 {\langle
(\delta f) \left(
\begin{array}{cccccccccc}
a_{i} &\ldots &\ldots & b_{i, n-1}& \prod\limits_{k=0}^{m-1} b_{i, n+k} & 1 &b_{1, i}&\ldots & b_{i-1, i}\\
1  & a_{i+1}& \ldots &b_{i+1, n-1}& \prod\limits_{k=0}^{m-1} b_{i+1, n+k} & 1 & b_{1, i+1}&\ldots & b_{i-1, i+1}\\
\vdots & \vdots & \vdots & \vdots& \vdots &  1 & \vdots &  \vdots\\
1& \ldots &\ldots & a_{n-1}& \prod\limits_{k=0}^{m-1} b_{n-1, n+k} &1 &\ldots& \ldots & b_{i-1, n-1}\\
1 & \ldots  &\ldots &&g(T^{n-1}_{n+m-1}) & 1&\prod\limits_{k=0}^{m-1} b_{1, n+k}& \ldots& \prod\limits_{k=0}^{m-1} b_{i-1, n+k}\\
1 & \ldots & \ldots & \ldots & \ldots & a_{n+m}& 1& \ldots &1\\
1 & 1 & \ldots & \ldots  &\ldots &\ldots & a_1 & \ldots & b_{1, i-1}\\
\vdots & \vdots & \vdots & \vdots& \vdots   &   \vdots& \vdots &  \vdots\\
1 & 1 & \ldots & \ldots  &\ldots & &\ldots & \ldots & a_{i-1}\\
\end{array}
 \right), 1\rangle}
$$

Thus, we obtain
\begin{equation}\label{ABC2}
0= \sum_{1\leq i \leq n, 1 \leq j\leq n-1, i+j\leq n+1}(-1)^{(j-1) (m-1)+i(n+m-1)}\left((-1)^{i+1}A_{i, j-1} + (-1)^{i+m}B_{i, j}+ (-1)^{i+n}C_{i-1, j}\right)
\end{equation}

Rearranging the terms in the above sum, and using equation \ref{ABC0}, we get,
$$\begin{array}{ll}
0= &\sum\limits_{1 \leq i \leq n, 1 \leq j\leq n, i+j\leq n}(-1)^{(j-1) (m-1)+i(n+m)+1}(A_{i, j} +
B_{i, j}+ C_{i, j})\\
&-\sum_{i=1}^{n} (-1)^{m-1 +i(n+m)}A_{i, 0} - \sum_{i=1}^n (-1)^{n(m+1)+i(n+1)}B_{i, n-i+1}\\
&-\sum_{j=1}^{n} (-1)^{(j-1)(m-1)}C_{0, j}
\end{array}
$$

$$
= \langle \delta(H)\left(
 \begin{array}{ccccccc} 
 a_1& b_{1,2}& b_{1, 3}&\ldots & b_{1, n+m-1}\\
1& a_2 &b_{2,3}&\ldots &b_{2, n+m-1}\\
\vdots & \vdots &\vdots & \vdots & \vdots \\
1 & 1 & 1&\ldots & b_{n+m-2, n+m-1}\\
1 & 1 & 1& \ldots & a_{n+m-1}
\end{array}
\right), a_{n+m}\rangle
$$ 
$$
-(-1)^{m(n+1)} \langle (\rho^2(f\otimes g))\left(
 \begin{array}{ccccccc} 
 a_1& b_{1,2}& b_{1, 3}&\ldots & b_{1, n+m-1}\\
1& a_2 &b_{2,3}&\ldots &b_{2, n+m-1}\\
\vdots & \vdots &\vdots & \vdots & \vdots \\
1 & 1 & 1&\ldots & b_{n+m-2, n+m-1}\\
1 & 1 & 1& \ldots & a_{n+m-1}
\end{array}
\right), a_{n+m}\rangle
$$
$$
-(-1)^{m(n+1)}\langle (\Delta(f)\cdot g)\left(
 \begin{array}{ccccccc} 
 a_1& b_{1,2}& b_{1, 3}&\ldots & b_{1, n+m-1}\\
1& a_2 &b_{2,3}&\ldots &b_{2, n+m-1}\\
\vdots & \vdots &\vdots & \vdots & \vdots \\
1 & 1 & 1&\ldots & b_{n+m-2, n+m-1}\\
1 & 1 & 1& \ldots & a_{n+m-1}
\end{array}
\right), a_{n+m}\rangle
$$
$$
- \langle (f \circ g)\left(
 \begin{array}{ccccccc} 
 a_1& b_{1,2}& b_{1, 3}&\ldots & b_{1, n+m-1}\\
1& a_2 &b_{2,3}&\ldots &b_{2, n+m-1}\\
\vdots & \vdots &\vdots & \vdots & \vdots \\
1 & 1 & 1&\ldots & b_{n+m-2, n+m-1}\\
1 & 1 & 1& \ldots & a_{n+m-1}
\end{array}
\right), a_{n+m}\rangle.
$$

\end{proof}

\begin{prop} \label{bvopx} The family $\Delta =\{\Delta^\bullet :C^\bullet(A,B,\varepsilon)\longrightarrow C^{\bullet-1}(A,B,\varepsilon)\}$ determines a BV-operator on the 
homotopy $G$-algebra $C^\bullet(A,B,\varepsilon)$. 

\end{prop}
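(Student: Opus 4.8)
The plan is to check directly the defining condition of Definition \ref{BVop}. So I fix cocycles $f\in Z^n(A,B,\varepsilon)$ and $g\in Z^m(A,B,\varepsilon)$ and try to show that
\[
[f,g]-(-1)^{(n-1)m}\bigl(\Delta(f)\cdot g+(-1)^n f\cdot\Delta(g)-\Delta(f\cdot g)\bigr)
\]
lies in $\delta(C^{m+n-2}(A,B,\varepsilon))$. The three facts I would feed into this are: the description of the Gerstenhaber bracket coming from the operad-with-multiplication structure of \cite{SS}, namely $[f,g]=f\circ g-(-1)^{(n-1)(m-1)}g\circ f$; Lemma \ref{H}, which rewrites $f\circ g$ modulo coboundaries through $\Delta$, the dot product and $\rho^2$; and Lemma \ref{delta} together with the identity $\rho^1(f\otimes g)+\rho^2(f\otimes g)=\Delta(f\cdot g)$ recorded just before it.

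First I would apply Lemma \ref{H} to the ordered pair $(f,g)$, obtaining, modulo $\delta(C^{m+n-2})$,
\[
f\circ g\equiv(-1)^{(n-1)m}\Delta(f)\cdot g-(-1)^{(n-1)m}\rho^2(f\otimes g).
\]
Then I would apply Lemma \ref{H} to the transposed pair $(g,f)$ and convert the resulting $\rho^2(g\otimes f)$ into $\rho^1(f\otimes g)$ by means of Lemma \ref{delta} (in the form $\rho^2(g\otimes f)=(-1)^{nm}\rho^1(f\otimes g)$); a short exponent computation then gives
\[
g\circ f\equiv(-1)^{(m-1)n}\Delta(g)\cdot f-(-1)^{n}\rho^1(f\otimes g).
\]
Substituting both congruences into $[f,g]=f\circ g-(-1)^{(n-1)(m-1)}g\circ f$ and using $\rho^1+\rho^2=\Delta(f\cdot g)$, the two $\rho$-terms combine into $-(-1)^{(n-1)m}\Delta(f\cdot g)$ and the term $(-1)^{(n-1)m}\Delta(f)\cdot g$ survives unchanged, both carrying exactly the signs demanded by the statement. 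The only reductions needed here are parity simplifications such as $(n-1)(m-1)+(m-1)n\equiv m+1\pmod 2$, so the bulk of this paragraph is sign bookkeeping.

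After this assembly the single remaining discrepancy between $[f,g]$ and the target is the placement of $\Delta(g)$ in the surviving dot product: the computation yields $(-1)^m\,\Delta(g)\cdot f$, whereas Definition \ref{BVop} asks for $(-1)^{(n-1)m+n}f\cdot\Delta(g)$. Thus the whole statement reduces to proving
\[
f\cdot\Delta(g)\equiv(-1)^{n(m-1)}\Delta(g)\cdot f\pmod{\delta(C^{m+n-2})},
\]
i.e. graded-commutativity of the dot product, up to coboundary, for a cochain of degree $n$ and one of degree $m-1$. For two cocycles this would be immediate from the single-brace operation of the homotopy $G$-algebra: the one-argument ($n=0$) instance of the second compatibility relation in the definition of a homotopy $G$-algebra expresses $f\cdot\Delta(g)$ and $\pm\Delta(g)\cdot f$ as differing by $\delta(f\{\Delta(g)\})$ together with the correction terms $(df)\{\Delta(g)\}$ and $(-1)^{|f|}f\{d\Delta(g)\}$, of which the first vanishes because $\delta f=0$.

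I expect the main obstacle to be exactly this last step, because $\Delta$ is not a morphism of complexes, so $\Delta(g)$ is not a cocycle and the correction term $f\{d\Delta(g)\}$ does not disappear for formal reasons. To control it I would not argue abstractly but return to the definition of the $\Delta_i$ through the pairing $\langle\cdot,\cdot\rangle$: using that $g$ is a cocycle ($\delta g=0$) together with the symmetry $\langle a_1,a_2\rangle=\langle a_2,a_1\rangle$ and invariance $\langle a_1a_2,a_3\rangle=\langle a_1,a_2a_3\rangle$ of the form — precisely the identities that already govern the interchange of the first and last rows in the tensor matrix defining $\Delta_i$ — I would show that $\langle f\cdot\Delta(g)-(-1)^{n(m-1)}\Delta(g)\cdot f,\,a\rangle$ collapses to the pairing of a coboundary against $a$. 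Once this homotopy-commutativity is established, the explicit coboundary witnessing Definition \ref{BVop} is assembled from the homotopy $H$ of Lemma \ref{H} for the two orderings together with $f\{\Delta(g)\}$, which completes the verification.
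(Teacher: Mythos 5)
Your argument coincides with the paper's proof of Proposition \ref{bvopx} through every step the paper actually carries out: the paper likewise starts from $[f,g]=f\circ g-(-1)^{(n-1)(m-1)}g\circ f$, applies Lemma \ref{H} to the pair $(f,g)$ and to the transposed pair $(g,f)$, uses Lemma \ref{delta} to turn $\rho^2(g\otimes f)$ into $\rho^1(f\otimes g)$, and then collapses $\rho^1(f\otimes g)+\rho^2(f\otimes g)$ into $\Delta(f\cdot g)$. Your intermediate congruence, with surviving terms $(-1)^{(n-1)m}\Delta(f)\cdot g$, $-(-1)^{(n-1)m}\Delta(f\cdot g)$ and $(-1)^{m}\Delta(g)\cdot f$, is exactly the paper's conclusion just before its last sentence, and your sign bookkeeping checks out.

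The divergence is in how the leftover term $(-1)^{m}\Delta(g)\cdot f$ gets converted into $(-1)^{(n-1)m+n}f\cdot\Delta(g)$. The paper does this in one line, asserting that ``the dot product is graded commutative'' and substituting $\Delta(g)\cdot f=(-1)^{n(m-1)}f\cdot\Delta(g)$ as an identity of cochains. You decline to do this, and your hesitation is well founded: the dot product on $C^\bullet(A,B,\varepsilon)$ is a cup product, which is graded commutative only up to homotopy (already in degree $0$ it is just multiplication in $A$, and a symmetric Frobenius algebra such as a matrix algebra need not be commutative), and the standard Gerstenhaber homotopy $\delta(f\{\Delta(g)\})$ does not settle the matter because $\Delta(g)$ is not a cocycle, so the correction term $f\{\delta\Delta(g)\}$ is uncontrolled. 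You have thus isolated the one step of the published proof that is asserted without justification and is not a cochain-level identity in general. However, your own treatment of that step is a plan rather than a proof: you say you would return to the definition of the $\Delta_i$ and show, using $\delta g=0$ together with the symmetry and invariance of the pairing, that $f\cdot\Delta(g)-(-1)^{n(m-1)}\Delta(g)\cdot f$ is a coboundary, but no such computation is exhibited, and it is not clear in advance that it succeeds (Lemma \ref{H} already spends the cocycle conditions $\delta f=0$ and $\delta g=0$ in an essential way, so there is no free hypothesis left merely to quote). As a self-contained verification of Definition \ref{BVop}, your proposal therefore has a gap at precisely the point where it improves on the paper: you have traded an unjustified assertion for an unexecuted computation. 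To finish along your lines one would need either to prove the homotopy-commutativity $\Delta(g)\cdot f\equiv(-1)^{n(m-1)}f\cdot\Delta(g)$ modulo $\delta(C^{m+n-2}(A,B,\varepsilon))$ by an explicit matrix computation of the kind appearing in the proof of Lemma \ref{H}, or to rerun that lemma's argument in mirror image so that the term $f\cdot\Delta(g)$, rather than $\Delta(g)\cdot f$, is produced directly.
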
 

\begin{proof} We consider $f\in Z^n(A,B,\varepsilon)$ and $g\in Z^m(A,B,\varepsilon)$. By definition (see \cite[$\S$ 3]{SS}), we know that
\begin{equation}\label{t2.8t}
[f, g]=f\circ g -(-1)^{(n-1)(m-1)}g \circ f \textrm{ }\in C^{m+n-1}(A,B,\varepsilon)
\end{equation} Applying Lemma \ref{H}, we know that the cochains
\begin{equation*}
\begin{array}{c}
f\circ g -(-1)^{(n-1)m} \Delta(f)\cdot g + (-1)^{(n-1)m} \rho^2(f\otimes g)\\
g\circ f -(-1)^{(m-1)n} \Delta(g)\cdot f + (-1)^{(m-1)n} \rho^2(g\otimes f)\\
\end{array}
\end{equation*} are coboundaries. 
From \eqref{t2.8t}, it now follows that
\begin{equation}\label{t2.9t} 
[f,g]-(-1)^{(n-1)m} \Delta(f)\cdot g + (-1)^{(n-1)m} \rho^2(f\otimes g) +(-1)^{(m-1)}\Delta(g)\cdot f+(-1)^{m}\rho^2(g\otimes f)
\end{equation} is a coboundary. Applying Lemma \ref{delta}, it follows from \eqref{t2.9t} that
\begin{equation*}
[f,g]-(-1)^{(n-1)m} \Delta(f)\cdot g + (-1)^{m(n-1)} \rho^2(f\otimes g) +(-1)^{(m-1)}\Delta(g)\cdot f+(-1)^{m(n-1)}\rho^1(f\otimes g)
\end{equation*} is a coboundary. Since $\rho^1(f\otimes g)+\rho^2(f\otimes g)=\Delta(f\cdot g)$, we get
\begin{equation*}
[f,g]-(-1)^{(n-1)m} \Delta(f)\cdot g + (-1)^{m(n-1)} \Delta(f\cdot g) +(-1)^{(m-1)}\Delta(g)\cdot f
\end{equation*} is a coboundary. Using the fact that the dot product is graded commutative, we can put $\Delta(g)\cdot f=(-1)^{n(m-1)}f\cdot \Delta(g)$. The result is now clear. 
\end{proof}

\begin{theorem}\label{BVGsec}
For  secondary cohomology classes  $\bar f \in H^n(A, B, \varepsilon)$ and $\bar g\in H^m(A, B, \varepsilon)$, the Gerstenhaber bracket is determined by
$$[\bar f, \bar g]= (-1)^{(n-1)m}\overline{( \Delta(f)\cdot g + (-1)^n f \cdot  \Delta(g)-\Delta(f \cdot g))}\in H^{m+n-1}(A,B,\varepsilon)$$ Here $f$ and $g$ are any cocycles 
representing the classes $\bar f$ and $\bar g$ respectively.
\end{theorem}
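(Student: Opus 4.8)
The plan is to read this statement off directly from the general result of Section~2. The point is that every hypothesis of Theorem~\ref{hombv} has already been established for the complex at hand: the secondary Hochschild complex $C^\bullet(A,B,\varepsilon)$ is a homotopy $G$-algebra by \cite[Proposition 3.1]{SS}, recalled at the beginning of this section, and Proposition~\ref{bvopx} shows that the family $\Delta=\{\Delta^n\}_{n\geq 0}$ constructed above is a BV-operator on it in the sense of Definition~\ref{BVop}. With these two inputs, the present theorem is nothing more than the specialization of Theorem~\ref{hombv} to the homotopy $G$-algebra $V^\bullet=C^\bullet(A,B,\varepsilon)$.

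Concretely, I would fix cocycles $f\in Z^n(A,B,\varepsilon)$ and $g\in Z^m(A,B,\varepsilon)$ representing $\bar f$ and $\bar g$. Proposition~\ref{bvopx} guarantees that $\Delta$ satisfies the defining condition of Definition~\ref{BVop}, namely that $[f,g]-(-1)^{(n-1)m}(\Delta(f)\cdot g + (-1)^n f\cdot\Delta(g)-\Delta(f\cdot g))$ lies in $\delta(C^{m+n-2}(A,B,\varepsilon))$. Theorem~\ref{hombv} then yields at once that $\Delta(f)\cdot g + (-1)^n f\cdot\Delta(g)-\Delta(f\cdot g)$ is a cocycle in $Z^{m+n-1}(A,B,\varepsilon)$ and that its cohomology class, multiplied by $(-1)^{(n-1)m}$, equals $[\bar f,\bar g]$ independently of the chosen representatives. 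This is exactly the asserted formula.

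I do not expect any genuine obstacle at this stage, since the entire computational burden has already been carried in Lemma~\ref{delta}, Lemma~\ref{H}, and Proposition~\ref{bvopx}, whose proofs handle the delicate bookkeeping over the tensor matrices. The only point deserving a line of justification is that the bracket $[-,-]$ appearing in Definition~\ref{BVop} and Theorem~\ref{hombv} is indeed the Gerstenhaber bracket on $H^\bullet(A,B,\varepsilon)$; this holds because the bracket used throughout Section~3 is the one from \cite[$\S$ 3]{SS}, given on cochains by $[f,g]=f\circ g-(-1)^{(n-1)(m-1)}g\circ f$, which is precisely the bracket descending to the Gerstenhaber structure on the secondary cohomology. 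The theorem therefore follows by direct appeal to Theorem~\ref{hombv}.
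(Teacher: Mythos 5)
Your proposal is correct and follows exactly the same route as the paper, whose proof of Theorem~\ref{BVGsec} is precisely the one-line application of Theorem~\ref{hombv} together with Proposition~\ref{bvopx}. Your additional remark identifying the bracket of Definition~\ref{BVop} with the Gerstenhaber bracket $[f,g]=f\circ g-(-1)^{(n-1)(m-1)}g\circ f$ from \cite[$\S$ 3]{SS} is a reasonable elaboration of a point the paper leaves implicit.
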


\begin{proof}
This follows directly by applying Theorem \ref{hombv} and Proposition \ref{bvopx}. 
\end{proof}

\begin{bibdiv}
\begin{biblist}

\bib{AB}{article}
{
   author={Banerjee, A.},
   title={Weak comp algebras and cup products in secondary Hochschild cohomology of entwining structures},
   journal={arXiv:1909.05476},
  
}

\bib{CSt}{article}{
   author={Corrigan-Salter, B. R.},
   author={Staic, M. D.},
   title={Higher-order and secondary Hochschild cohomology},
   journal={C. R. Math. Acad. Sci. Paris},
   volume={354},
   date={2016},
   number={11},
   pages={1049--1054},
}

\bib{Gerst}{article}{
   author={Gerstenhaber, M.},
   title={The cohomology structure of an associative ring},
   journal={Ann. of Math. (2)},
   volume={78},
   date={1963},
   pages={267--288},
}

\bib{GV0}{article}{
   author={Voronov, A. A.},
   author={Gerstenkhaber, M.},
   title={Higher-order operations on the Hochschild complex},
   journal={Funktsional. Anal. i Prilozhen.},
   volume={29},
   date={1995},
   number={1},
   pages={1--6, 96},
   issn={0374-1990},
   translation={
      journal={Funct. Anal. Appl.},
      volume={29},
      date={1995},
      number={1},
      pages={1--5},
      issn={0016-2663},
   },
 
}

\bib{GV}{article}{
   author={Gerstenhaber, M.},
   author={Voronov, A. A.},
   title={Homotopy $G$-algebras and moduli space operad},
   journal={Internat. Math. Res. Notices},
   date={1995},
   number={3},
   pages={141--153},
}

\bib{Kosmm}{article}{
   author={Kosmann-Schwarzbach, Y.},
   title={Quasi, twisted, and all that$\ldots$in Poisson geometry and Lie
   algebroid theory},
   conference={
      title={The breadth of symplectic and Poisson geometry},
   },
   book={
      series={Progr. Math.},
      volume={232},
      publisher={Birkh\"{a}user Boston, Boston, MA},
   },
   date={2005},
   pages={363--389},
}

\bib{Kos}{article}{
   author={Koszul, J.-L},
   title={Crochet de Schouten-Nijenhuis et cohomologie},
   note={The mathematical heritage of \'{E}lie Cartan (Lyon, 1984)},
   journal={Ast\'{e}risque},
   date={1985},
   number={Num\'{e}ro Hors S\'{e}rie},
   pages={257--271},
}

\bib{LSS}{article}{
   author={Laubacher, J.},
   author={Staic, M. D.},
   author={Stancu, A.},
   title={Bar simplicial modules and secondary cyclic (co)homology},
   journal={J. Noncommut. Geom.},
   volume={12},
   date={2018},
   number={3},
   pages={865--887},
}

\bib{SS}{article}{
   author={Staic, M. D.},
   author={Stancu, A.},
   title={Operations on the secondary Hochschild cohomology},
   journal={Homology Homotopy Appl.},
   volume={17},
   date={2015},
   number={1},
   pages={129--146},
}

\bib{S}{article}{
   author={Staic, M. D.},
   title={Secondary Hochschild cohomology},
   journal={Algebr. Represent. Theory},
   volume={19},
   date={2016},
   number={1},
   pages={47--56},
}

\bib{T}{article}{
   author={Tradler, T.},
   title={The Batalin-Vilkovisky algebra on Hochschild cohomology induced by
   infinity inner products},
   journal={Ann. Inst. Fourier (Grenoble)},
   volume={58},
   date={2008},
   number={7},
   pages={2351--2379},
}

\end{biblist}
\end{bibdiv}
\end{document}